\def\thetitle{Toolbox: Gaussian comparison on Eucledian balls}
\def\theruntitle{gaussian comparison}
\def\theabstract{
	In the work a characterization of difference of multivariate Gaussian measures is found on the family of centered Eucledian balls. In particular, it helps to derive 
	\[
	\sup_{t}\left|\P\left(\|\xv_1\|^2<t\right)-\P\left(\|\xv_0\|^2<t\right)\right|\leq \sqrt{\left|Tr\left(I-\left(\Sigma_0\Sigma_1^{-1}+\Sigma_0^{-1}\Sigma_1\right)/2\right)\right|}
	\]
	with vectors $\xv_1\sim\mathcal{N}\left(0,\Sigma_{1}\right)$ and $\xv_0\sim\mathcal{N}\left(0,\Sigma_{0}\right)$.
}
\def\amsPrime{60F17}
\def\amsSec{60F25,60B12}
\def\thekeywords{multivariate Gaussian measure, Kolmogorov distance, Gaussian comparison}
\def\authora{Andzhey Koziuk}
\def\authorb{Vladimir Spokoiny}
\def\runauthora{koziuk, a.}
\def\runauthorb{spokoiny, v.}
\def\addressa{
	Weierstrass Institute, \\
	Institute for Information Transmission Problems of RAS \\
	Mohrenstrasse 39 \\
	10117 Berlin, Germany
}
\def\addressb{
	Weierstrass Institute and \\ Humboldt University Berlin, \\ Moscow Institute of
	Physics and Technology \\
	Mohrenstr. 39, \\
	10117 Berlin, Germany
}
\def\emaila{andzhey.koziuk@wias-berlin.de}
\def\emailb{spokoiny@wias-berlin.de}
\def\month{August}
\def\year{2017}
\newcommand{\ifpaper}[2]{#2}	  
\newcommand{\ifau}[3]{#2}			
\newcommand{\style}[2]{#2}			
\def\eqdef{\stackrel{\operatorname{def}}{=}}
\newcommand{\bb}[1]{\boldsymbol{#1}}
\def\Gamma{\varGamma}
\def\Pi{\varPi}
\def\Sigma{\varSigma}
\def\Delta{\varDelta}
\def\Lambda{\varLambda}
\def\Psi{\varPsi}
\def\Phi{\varPhi}
\def\Theta{\varTheta}
\def\Omega{\varOmega}
\def\Xi{\varXi}
\def\Upsilon{\varUpsilon}
\def\R{I\!\!R}
\def\E{I\!\!E}
\def\P{I\!\!P}
\def\kappa{\varkappa}
\def\xv{\bb{x}}
\def\yv{\bb{y}}
\def\Ind{\operatorname{1}\hspace{-4.3pt}\operatorname{I}}
\newcounter{example}[section]
\newcounter{remark}[section]
\numberwithin{equation}{section}
\numberwithin{figure}{section}
\numberwithin{example}{section}
\numberwithin{remark}{section}
\newtheorem{theorem}{Theorem}[section]
\newtheorem{lemma}[theorem]{Lemma}
\newtheorem{corollary}[theorem]{Corollary}
\newtheorem{exmp}[example]{Example}
\newtheorem{rmrk}[remark]{Remark}
\newenvironment{example}{\begin{exmp}\rm}{\end{exmp}}
\newenvironment{remark}{\begin{rmrk}\rm}{\end{rmrk}}
	\title{\thetitle}
	\date{\month, \year \vspace{5cm}}
	\renewenvironment{abstract}
	{\centerline{\textbf{Abstract}}\bigskip
		\begin{center}
			\begin{minipage}{10cm}
				\begin{small}
				}
				{   \end{small}
			\end{minipage}
		\end{center}
		\bigskip
	}
	\title{\thetitle}
	\date{}
	\renewenvironment{abstract}
	{\centerline{\textbf{Abstract}}
		\begin{center}
			\begin{minipage}{15cm}
				\begin{small}
				}
				{   \end{small}
			\end{minipage}
		\end{center}
	}
\begin{document}
	\thispagestyle{empty}	
	\ifpaper
	{
		\title{\thetitle}
		\ifau{ 
			\author
			{
				\authora
				\ifdef{\thanksa}{\thanks{\thanksa}}{}
				\\[5.pt]
				\addressa \\
				\texttt{ \emaila}
			}
		}
		{  
			\author{
				\authora
				\ifdef{\thanksa}{\thanks{\thanksa}}{}
				\\[5.pt]
				\addressa \\
				\texttt{ \emaila}
				\and
				\authorb
				\ifdef{\thanksb}{\thanks{\thanksb}}{}
				\\[5.pt]
				\addressb \\
				\texttt{ \emailb}
			}
		}
		{   
			\author{
				\authora
				\ifdef{\thanksa}{\thanks{\thanksa}}{}
				\\[5.pt]
				\addressa \\
				\texttt{ \emaila}
				\and
				\authorb
				\ifdef{\thanksb}{\thanks{\thanksb}}{}
				\\[5.pt]
				\addressb \\
				\texttt{ \emailb}
				\and
				\authorc
				\ifdef{\thanksc}{\thanks{\thanksc}}{}
				\\[5.pt]
				\addressc \\
				\texttt{ \emailc}
			}
		}
		\thispagestyle{empty}
		\maketitle
		\pagestyle{myheadings}
		\markboth
		{\hfill \textsc{ \small \theruntitle} \hfill}
		{\hfill
			\textsc{ \small
				\ifau{\runauthora}
				{\runauthora and \runauthorb}
				{\runauthora, \runauthorb, and \runauthorc}
			}
			\hfill}
		\thispagestyle{empty}
		\newpage
		\begin{abstract}
			\theabstract
		\end{abstract}
		\pagenumbering{arabic}

		\par\noindent\emph{AMS 2000 Subject Classification:} Primary \amsPrime. Secondary \amsSec

		\par\noindent\emph{Keywords}: \thekeywords
	}{			
		\ifau{ 
			\author
			{
				\authora
				\ifdef{\thanksa}{\thanks{\thanksa}}{}
				\\[5.pt]
				\addressa \\
				\texttt{ \emaila}
			}
		}
		{  
			\author{
				\authora
				\ifdef{\thanksa}{\thanks{\thanksa}}{}
				\\[5.pt]
				\addressa \\
				\texttt{ \emaila} \\
				\and \\
				\authorb
				\ifdef{\thanksb}{\thanks{\thanksb}}{}
				\\[5.pt]
				\addressb \\
				\texttt{ \emailb}
			}
		}
		{   
			\author{
				\authora
				\ifdef{\thanksa}{\thanks{\thanksa}}{}
				\\[5.pt]
				\addressa \\
				\texttt{ \emaila}
				\and
				\authorb
				\ifdef{\thanksb}{\thanks{\thanksb}}{}
				\\[5.pt]
				\addressb \\
				\texttt{ \emailb}
				\and
				\authorc
				\ifdef{\thanksc}{\thanks{\thanksc}}{}
				\\[5.pt]
				\addressc \\
				\texttt{ \emailc}
			}
		}
		\maketitle
		\begin{abstract}
			\theabstract
		\end{abstract}
		\pagenumbering{arabic}
		\par\noindent\emph{Keywords}: \thekeywords
	}
\section{Introduction}
The work is organized as a report and exposition is kept concise - no proof is deferred, introduction reduced to minimum and conclusion contains problems to address in view of the development. 

The work centers on comparison of the measures. The prerequisite for a study was an idea that difference of multivariate probabilities being more regular than a probability enjoys an independent structure. In essence, regularity gained through subtraction was channeled into a probability to illuminate possibly existing construct. For an explorer it is inevitable to assume a guideline before the factual verification. Luckily enough such a structure was found to exist on a family of multivariate Gaussian measures on centered Eucledian balls.

\section{Associated development}
The most tightly related investigation and motivational examples can be found in {G{\"o}tze}, F. and {Naumov}, A. and {Spokoiny}, V. and {Ulyanov}, V. \cite{2017arXiv170808663G}. Compared to the work the current development is not based on the pdf estimation and derives an explicit characterization for the difference of multivariate Gaussian measures (see corollary [$\ref{L1}$]) .
 
\section{Gaussian comparison on Euclidean balls}\label{GC}
Kolmogorov distance on a class of centered Euclidean balls 
\[
\mathcal{B}_t\eqdef\{\xv\in\R^p\;:\;\|\xv\|<\sqrt{t}\}
\] 
is studied
\[
\sup_{t}\left|\P(\xv_1\in\mathcal{B}_t)-\P(\xv_0\in\mathcal{B}_t)\right|
\]
For independent $\xv_0\sim\mathcal{N}\left(0,\Sigma_0\right)$ and $\xv_1\sim\mathcal{N}\left(0,\Sigma_1\right)$
define a composite vector 
\[
\xv_s=\sqrt{1-s}\xv_0+\sqrt{s}\xv_1
\]
with $s=[0,1]$ and following 
\[
\xv_s\sim\mathcal{N}\left(0,\Sigma_s\right)
\]
where 
\[
\Sigma_s=(1-s)\Sigma_{0}+s\Sigma_{1}.
\]
The vector enables construction of a bridge over the regular difference
\begin{equation}\label{E4}
g_{\alpha}(t)\eqdef\E f\left(\alpha\|\xv_1\|^2-\alpha t\right)-\E f\left(\alpha\|\xv_0\|^2- \alpha t\right)
\end{equation}
\[
\eqdef\int_0^1\E\frac{\partial f_{\alpha}(\|\xv_s\|^2-t)}{\partial s}ds=\alpha\int_0^1\E f_{\alpha}^{\prime}\left(\|\xv_s\|^2- t\right)\xv_s^T\left(\xv_1\frac{1}{\sqrt{s}}-\xv_0\frac{1}{\sqrt{1-s}}\right)ds
\]
using fundamental theorem of calculus and assuming that $f$ is sufficiently smooth.

The most direct tool to work with Gaussian measures is Stein's identity. In the current work one uses an adaptation of the fact.
\begin{lemma}[Stein's lemma]\label{Stein}
	If $\xv\in\R^p$ is a centered Gaussian with a covariance $\Sigma\succ0$ and a vector function $h(\xv):\R^p\rightarrow \R^p$ is an almost differentiable with  $\E\|\frac{\partial h_j(\xv)}{\partial\xv}\|\leq\infty$ then
	\begin{equation}
	\E\xv h^T(\xv)=\Sigma \E \frac{\partial h^T(\xv)}{\partial\xv}.
	\end{equation}
\end{lemma}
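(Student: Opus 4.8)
The plan is to establish the identity entrywise by integrating by parts against the Gaussian density. Write $\phi(\xv) = (2\pi)^{-p/2}|\Sigma|^{-1/2}\exp(-\tfrac{1}{2}\xv^{\T}\Sigma^{-1}\xv)$ for the density of $\mathcal{N}(0,\Sigma)$. The one structural fact that powers the whole argument is the first-order relation $\nabla\phi(\xv) = -\Sigma^{-1}\xv\,\phi(\xv)$, obtained by differentiating the exponent; multiplying by $\Sigma$ this reads $\xv\,\phi(\xv) = -\Sigma\,\nabla\phi(\xv)$, i.e.\ $x_i\phi = -\sum_k \Sigma_{ik}\,\partial_{x_k}\phi$ in coordinates. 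The $(i,j)$ entry of the left-hand side of the claim is then $\E[x_i h_j(\xv)] = \int x_i h_j(\xv)\,\phi(\xv)\,\dd\xv = -\sum_k \Sigma_{ik}\int h_j(\xv)\,\partial_{x_k}\phi(\xv)\,\dd\xv$.

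First I would perform the one-dimensional integration by parts in the $x_k$ variable to move the derivative off the density, $\int h_j\,\partial_{x_k}\phi\,\dd\xv = -\int(\partial_{x_k}h_j)\,\phi\,\dd\xv$. Summing over $k$ and reading the result back as a matrix product gives $\E[x_i h_j] = \sum_k \Sigma_{ik}\,\E[\partial_{x_k}h_j] = \bigl(\Sigma\,\E[\partial h^{\T}/\partial\xv]\bigr)_{ij}$, which is precisely the assertion.

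The main obstacle is the rigorous justification of this integration by parts under the stated hypotheses — $h$ merely \emph{almost differentiable} with $\E\|\partial h_j/\partial\xv\|<\infty$, rather than classically $C^1$. I would treat each coordinate separately using Fubini: freezing the coordinates $\{x_m\}_{m\neq k}$ and integrating along the line in the $x_k$-direction. Almost differentiability is exactly the statement that $h_j$ is absolutely continuous on almost every such line, so the fundamental theorem of calculus applies there and the one-dimensional integration by parts is legitimate. The boundary contributions at $x_k\to\pm\infty$ vanish because $\phi$ decays faster than any polynomial while the integrability assumption bounds the growth of $h_j$ along these lines, and Fubini then reassembles the coordinatewise identities into the full $p$-dimensional statement.

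A cleaner route that avoids the boundary bookkeeping is to reduce to the isotropic case. Writing $\xv = \Sigma^{1/2}\zv$ with $\zv\sim\mathcal{N}(0,I_p)$ and $g(\zv)\eqdef h(\Sigma^{1/2}\zv)$, one has $\E[\xv h^{\T}(\xv)] = \Sigma^{1/2}\,\E[\zv g^{\T}(\zv)]$. Applying the standard isotropic identity $\E[z_k g_j(\zv)] = \E[\partial_{z_k}g_j(\zv)]$ entrywise and expanding $\partial_{z_k}g_j = \sum_m (\Sigma^{1/2})_{mk}\,(\partial_{x_m}h_j)$ by the chain rule produces a second factor $\Sigma^{1/2}$, and the two factors recombine as $\Sigma^{1/2}\Sigma^{1/2}=\Sigma$, yielding the claim. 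This localizes all analytic difficulty into the one-dimensional Gaussian integration by parts, where almost differentiability together with integrability is the textbook sufficient condition.
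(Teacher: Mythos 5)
Your proposal is correct, and your second (``cleaner'') route is exactly the paper's own proof: the paper reduces to the standard normal via the change of variables $\xv=\Sigma^{1/2}\yv$, invokes Stein's 1981 identity for $\mathcal{N}(0,I)$ applied to $h_j(\Sigma^{1/2}\yv)$, and recombines the two factors of $\Sigma^{1/2}$ through the chain rule before assembling the columns into the matrix identity, just as you do. Your first route merely inlines the integration-by-parts argument that underlies the cited isotropic identity, so there is no substantive difference in approach.
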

\begin{proof}
	In his work Charles M. Stein 1981 \cite{stein1981estimation} (Lemma 2) derived for standard normal $\yv\sim\mathcal{N}(0,I)$ and a function $h_j(\Sigma^{1/2}\yv):\R^p\rightarrow\R$
	\[
	\E\yv h_j(\Sigma^{1/2}\yv)=\E \frac{\partial h_j(\Sigma^{1/2}\yv)}{\partial\yv}=\Sigma^{1/2}\E \frac{\partial h_j(\Sigma^{1/2}\yv)}{\partial\Sigma^{1/2}\yv}.
	\]
	A change of variables $\xv=\Sigma^{1/2}\yv$ yields $\E\xv h_j(\xv)=\Sigma\E \frac{\partial h_j(\xv)}{\partial\xv}$. Therefore, the rules of a matrix multiplication dictate
	\[
	\E\xv h^T(\xv)=\left(\E\xv h_1(\xv),\E\xv h_2(\xv),...,\E\xv h_p(\xv)\right)
	\]
	\[
	=\left(\Sigma\E \frac{\partial h_1(\xv)}{\partial\xv},\Sigma\E \frac{\partial h_2(\xv)}{\partial\xv},...,\Sigma\E \frac{\partial h_p(\xv)}{\partial\xv}\right)=\Sigma\E \frac{\partial h^T(\xv)}{\partial\xv}.
	\]
\end{proof}

For an expository reasons only let us also include in the note another - less straightforward, however applicable in a most general case - relaxation method of a probability. Introduce a smooth indicator function
	\begin{equation}\label{smoothingF}
	f_{\alpha}(x)\eqdef\Ind(x>0)-\frac{1}{2}\textbf{sign}(x)e^{-\alpha\left|x\right|}
	\end{equation}
	and a regular and limiting object based on an integral operator $\mathbb{L}_{\xv}(\cdot)$ (e.g. expectation) 
	\[
	\phi_{\alpha}(t)\eqdef\mathbb{L}_{\xv}\left(f_{\alpha}(\|\xv\|^2 - t)\right)\;\;\text{and}\;\;\phi_{\infty}(t)\eqdef\mathbb{L}_{\xv}\left(\Ind\left(\|\xv\|^2>t\right)\right).
	\]
	The relaxation of the indicator - the kernel ($\ref{smoothingF}$) - is characterized structurally in the next lemma. 
	\begin{lemma}\label{LinSmoothing}
	Assume an integral operator $\mathbb{L}_{\xv}\left(\cdot\right)$ s.t. $\phi_{\alpha}(t)$ is
	smooth bounded and with bounded second derivative. Then $\phi_{\alpha}(t)$ satisfies an ODE
	\[
	\phi_{\alpha}(t)=\phi_{\infty}(t)+\phi^{\prime\prime}_{\alpha}(t)/\alpha^2.
	\]
	Moreover, an ordering holds
	\[
	\forall\alpha>0\;\;\sup_t\left|\phi_{\alpha}(t)\right|\leq\sup_t\left|\phi_{\infty}(t)\right|.
	\]
	\end{lemma}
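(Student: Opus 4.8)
The plan is to reduce the statement to a pointwise ordinary differential equation satisfied by the smoothing kernel $f_{\alpha}$ itself and then to transport that identity through the linear operator $\mathbb{L}_{\xv}$. First I would record the explicit form of $f_{\alpha}$ off the origin: for $x>0$ one has $f_{\alpha}(x)=1-\tfrac12 e^{-\alpha x}$ and for $x<0$ one has $f_{\alpha}(x)=\tfrac12 e^{\alpha x}$, so that $f_{\alpha}$ is continuous, tends to $\Ind(x>0)$ as $\alpha\to\infty$, and is globally $C^1$ with $f_{\alpha}'(0^{\pm})=\alpha/2$. A direct differentiation then gives, for every $x\neq 0$,
\[
f_{\alpha}''(x)=\alpha^2\bigl(f_{\alpha}(x)-\Ind(x>0)\bigr),
\]
that is, the kernel obeys $f_{\alpha}(x)=\Ind(x>0)+f_{\alpha}''(x)/\alpha^2$ away from the single point $x=0$, where only the bounded second derivative jumps.

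Next I would substitute $x=\|\xv\|^2-t$ and apply $\mathbb{L}_{\xv}$. Because $\{\|\xv\|^2=t\}$ is negligible for the measures at hand, the exceptional point contributes nothing under the operator, and the kernel identity integrates to $\phi_{\alpha}(t)=\phi_{\infty}(t)+\alpha^{-2}\,\mathbb{L}_{\xv}\!\left(f_{\alpha}''(\|\xv\|^2-t)\right)$. It remains to identify the last term with $\phi_{\alpha}''(t)$. Since $\partial_t f_{\alpha}(\|\xv\|^2-t)=-f_{\alpha}'(\|\xv\|^2-t)$ and $\partial_t^2 f_{\alpha}(\|\xv\|^2-t)=f_{\alpha}''(\|\xv\|^2-t)$, and since $f_{\alpha}'$ is continuous while $f_{\alpha}''$ is bounded, the hypothesis that $\phi_{\alpha}$ is smooth with bounded second derivative legitimizes differentiating twice under $\mathbb{L}_{\xv}$; this yields $\mathbb{L}_{\xv}(f_{\alpha}''(\|\xv\|^2-t))=\phi_{\alpha}''(t)$ and hence the claimed ODE.

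For the ordering I would run a maximum-principle argument. If $\phi_{\alpha}$ attained its extrema on $\R$ the conclusion would be immediate: at an interior maximum $t_0$ one has $\phi_{\alpha}''(t_0)\le 0$, so the ODE forces $\phi_{\infty}(t_0)\ge\phi_{\alpha}(t_0)$, and symmetrically at a minimum. The genuine obstacle is that the supremum over the unbounded line need not be attained, and I would circumvent it by perturbation: since $\phi_{\alpha}$ is bounded, $t\mapsto\phi_{\alpha}(t)-\delta t^2$ tends to $-\infty$ at $\pm\infty$ and therefore attains a maximum at some finite $t_{\delta}$, where $\phi_{\alpha}''(t_{\delta})\le 2\delta$. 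Feeding this into the ODE gives $\sup_s\phi_{\infty}(s)\ge\phi_{\alpha}(t_{\delta})-2\delta/\alpha^2$, while comparing $\phi_{\alpha}(t_{\delta})-\delta t_{\delta}^2$ with the value of the same perturbed function at a near-maximizer of $\phi_{\alpha}$ shows $\phi_{\alpha}(t_{\delta})\ge\sup_s\phi_{\alpha}(s)-\epsilon-O(\delta)$. Letting $\delta\to 0$ and then $\epsilon\to 0$ yields $\sup_s\phi_{\infty}(s)\ge\sup_s\phi_{\alpha}(s)$; the mirror-image argument with $\phi_{\alpha}(t)+\delta t^2$ gives $\inf_s\phi_{\infty}(s)\le\inf_s\phi_{\alpha}(s)$. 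Combining the two inclusions, the range of $\phi_{\alpha}$ sits inside the range of $\phi_{\infty}$, whence $\sup_t|\phi_{\alpha}(t)|\le\sup_t|\phi_{\infty}(t)|$. I expect this perturbation step, forcing and controlling an interior extremum of a bounded function on all of $\R$, to be the only delicate point; the kernel computation and the interchange of $\mathbb{L}_{\xv}$ with $\partial_t$ are routine under the stated regularity.
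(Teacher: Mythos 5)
Your proof is correct and follows essentially the same route as the paper's: the pointwise kernel identity $f_{\alpha}''(x)=\alpha^{2}\left(f_{\alpha}(x)-\Ind(x>0)\right)$ for $x\neq 0$, transported through $\mathbb{L}_{\xv}$ by differentiating twice under the operator, and then a maximum-principle argument to get the ordering. The only difference is that your quadratic-perturbation step rigorously handles the case where the supremum of $\phi_{\alpha}$ is not attained on $\R$, a point the paper's proof glosses over with the bare remark about ``characterization of extreme points'' and boundedness of the second derivative; this is a tightening of the same argument rather than a different approach.
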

	\begin{proof} Notice that
	\[
	\mathbb{L}_{\xv}\left(f_{\alpha}(\|\xv\|^2- t)-\Ind\left(\|\xv\|^2>t\right)\right)=\frac{\left(\mathbb{L}_{\xv}\left(f_{\alpha}(\|\xv\|^2- t)\right)\right)^{\prime\prime}_t}{\alpha}.
	\]
	Thus the kernel ($\ref{smoothingF}$) admits an ODE representation
	\[
	\mathbb{L}_{\xv}\left( f_{\alpha}(\|\xv\|^2- t)\right)=\mathbb{L}_{\xv}\left(\Ind\left(\|\xv\|^2>t\right)\right)+\frac{1}{\alpha^2}\left(\mathbb{L}_{\xv} \left( f_{\alpha}(\|\xv\|^2- t)\right)\right)^{\prime\prime}_{t}
	\]
	with an inequality
	\[
	\sup_t\left|\mathbb{L}_{\xv}\left( f_{\alpha}(\|\xv\|^2- t)\right)\right|\leq\sup_t\abs{\mathbb{L}_{\xv}\left(\Ind\left(\|\xv\|^2>t\right)\right)}
	\]
	following from the characterization of extreme points - second derivative in maximum is negative and positive in minimum - and boundedness of the second derivative.
	\end{proof}

The regular difference $g_{\alpha}(t)$ $\left(\ref{E4}\right)$ with the kernel ($\ref{smoothingF}$) can be equivalently rewritten by the means of Stein's lemma [$\ref{Stein}$].
\begin{lemma}\label{L0}
	Denote $\yv\sim \mathcal{N}\left(0,I\right)$ and uniform $s\sim\mathcal{U}(0,1)$ and define a linear operator
	\[
	\mathbb{L}_{\yv,s}\eqdef\frac{1}{2}\E\left(\yv^T\log^{\prime}\Sigma_{s}\yv-\log^{\prime}\det\Sigma_{s}\right)\left(\cdot\right)
	\]
	then the function ($\ref{E4}$) follows
	\[
	g_{\alpha}(t)=\mathbb{L}_{\yv,s}\left(f_{\alpha}\left(\yv^T\Sigma_s\yv - t\right)\right).
	\]
\end{lemma}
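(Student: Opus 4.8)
The plan is to establish the single $s$-slice identity behind the bridge and then integrate it. Reading the construction in \eqref{E4} as $g_{\alpha}(t)=\int_0^1\partial_s\psi(s,t)\,ds$ with $\psi(s,t)\eqdef\E f_{\alpha}(\|\xv_s\|^2-t)$, it suffices to rewrite one slice $\partial_s\psi(s,t)=\alpha\,\E f_{\alpha}'(\|\xv_s\|^2-t)\,\xv_s^{\T}\bigl(\xv_1/\sqrt{s}-\xv_0/\sqrt{1-s}\bigr)$. Two features must be handled: the bracketed vector couples $\xv_s$ to the external $\xv_0,\xv_1$, and the slice still carries $f_{\alpha}'$ rather than $f_{\alpha}$. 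First I would discharge the coupling. Put $\qv\eqdef\xv_1/\sqrt{s}-\xv_0/\sqrt{1-s}$; since $\xv_0,\xv_1$ are independent and centered, $(\xv_s,\qv)$ is jointly centered Gaussian with $\E\,\xv_s\qv^{\T}=\Sigma_1-\Sigma_0=\Sigma_s'$, so the Gaussian conditional mean is $\E(\qv\mid\xv_s)=\Sigma_s'\Sigma_s^{-1}\xv_s$ and the orthogonal remainder, being independent and centered, vanishes against the scalar weight. Hence the slice reduces to $\alpha\,\E f_{\alpha}'(\|\xv_s\|^2-t)\,\xv_s^{\T}\Sigma_s'\Sigma_s^{-1}\xv_s$.

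Next I would trade $f_{\alpha}'$ for $f_{\alpha}$ by Gaussian integration by parts, which is the content of Lemma~\ref{Stein}. In the convention of \eqref{E4} the chain rule reads $\nabla_{\xv_s}f_{\alpha}(\|\xv_s\|^2-t)=2\alpha\,f_{\alpha}'(\|\xv_s\|^2-t)\,\xv_s$, so the slice equals $\tfrac12\,\E\,(\Sigma_s'\Sigma_s^{-1}\xv_s)^{\T}\nabla_{\xv_s}f_{\alpha}(\|\xv_s\|^2-t)$ and the prefactor $\alpha$ cancels. Transferring the gradient onto the Gaussian density of $\xv_s$ (Stein's identity applied to the vector field $\xv_s\mapsto\Sigma_s'\Sigma_s^{-1}\xv_s$) turns it into the score term $\xv_s^{\T}\Sigma_s^{-1}\Sigma_s'\Sigma_s^{-1}\xv_s$ minus the divergence $\tr(\Sigma_s'\Sigma_s^{-1})$, giving $\partial_s\psi(s,t)=\tfrac12\,\E f_{\alpha}(\|\xv_s\|^2-t)\bigl(\xv_s^{\T}\Sigma_s^{-1}\Sigma_s'\Sigma_s^{-1}\xv_s-\tr(\Sigma_s^{-1}\Sigma_s')\bigr)$. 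Whitening $\xv_s=\Sigma_s^{1/2}\yv$ with $\yv\sim\mathcal N(0,I)$ then sends $\|\xv_s\|^2$ to $\yv^{\T}\Sigma_s\yv$, the trace to $\log'\det\Sigma_s=\tr(\Sigma_s^{-1}\Sigma_s')$, and the quadratic form to $\yv^{\T}\Sigma_s^{-1/2}\Sigma_s'\Sigma_s^{-1/2}\yv$.

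The step I expect to be the main obstacle is matching the whitened quadratic form with $\yv^{\T}\log'\Sigma_s\,\yv$, because for the affine family $\Sigma_s=(1-s)\Sigma_0+s\Sigma_1$ the matrices $\Sigma_s$ and $\Sigma_s'=\Sigma_1-\Sigma_0$ need not commute, so $\Sigma_s^{-1/2}\Sigma_s'\Sigma_s^{-1/2}\neq(\log\Sigma_s)'$ as matrices. The way out is that the companion weight $f_{\alpha}(\yv^{\T}\Sigma_s\yv-t)$ sees $\yv$ only through $\yv^{\T}\Sigma_s\yv$: diagonalising $\Sigma_s=U\Lambda U^{\T}$ and using the symmetry $y_i\mapsto-y_i$ shows that $\E\,\yv^{\T}M\yv\,f_{\alpha}(\yv^{\T}\Sigma_s\yv-t)$ depends on $M$ only through the diagonal of $U^{\T}MU$. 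Since the Daleckii--Krein formula gives $[U^{\T}(\log\Sigma_s)'U]_{ii}=\lambda_i^{-1}[U^{\T}\Sigma_s'U]_{ii}$, which is exactly the diagonal of $\Sigma_s^{-1/2}\Sigma_s'\Sigma_s^{-1/2}$, the two quadratic forms have identical expectations against the weight and may be interchanged. Thus $\partial_s\psi(s,t)=\tfrac12\,\E_{\yv}\bigl(\yv^{\T}\log'\Sigma_s\,\yv-\log'\det\Sigma_s\bigr)f_{\alpha}(\yv^{\T}\Sigma_s\yv-t)$, and integrating over $s\in[0,1]$ — the uniform average that defines $\mathbb L_{\yv,s}$ — delivers $g_{\alpha}(t)=\mathbb L_{\yv,s}\bigl(f_{\alpha}(\yv^{\T}\Sigma_s\yv-t)\bigr)$.
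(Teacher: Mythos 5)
Your proof is correct and arrives at the paper's intermediate identities, but two of its steps genuinely differ from the paper's route. First, the decoupling: the paper obtains $\E f_{\alpha}^{\prime}\,\xv_s^{T}(\Sigma_1-\Sigma_0)\Sigma_s^{-1}\xv_s$ from the bridge by applying Stein's lemma in $\xv_0$ (resp.\ $\xv_1$), an informal chain-rule step involving $\partial\xv_0/\partial\xv_s$, and then Stein's lemma again in $\xv_s$; your conditioning argument $\E(\qv\mid\xv_s)=(\Sigma_1-\Sigma_0)\Sigma_s^{-1}\xv_s$ gets there in one transparent step and is the more elementary device (the subsequent integration by parts trading $f_{\alpha}^{\prime}$ for $f_{\alpha}$ is then essentially identical in the two proofs). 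Second, and more importantly, the whitening: the paper disposes of it with ``the change of variables concludes'', but after substituting $\xv_s=\Sigma_s^{1/2}\yv$ the quadratic form actually involves $\Sigma_s^{-1/2}(\Sigma_1-\Sigma_0)\Sigma_s^{-1/2}$ (your symmetric arrangement; $\Sigma_s^{1/2}(\Sigma_1-\Sigma_0)\Sigma_s^{-3/2}$ in the paper's ordering), which is \emph{not} equal to the matrix $\log^{\prime}\Sigma_s$ unless $\Sigma_0$ and $\Sigma_1$ commute. Your observation that the weight $f_{\alpha}(\yv^{T}\Sigma_s\yv-t)$ is invariant under sign flips in the eigenbasis of $\Sigma_s$, so that only the diagonal of the quadratic-form matrix in that basis matters and these diagonals coincide, is exactly the argument needed to close this gap, which the paper leaves implicit; this is a genuine improvement in rigor. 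One last remark: the paper \emph{defines} $\log^{\prime}\Sigma_s\eqdef(\Sigma_1-\Sigma_0)\Sigma_s^{-1}$ rather than taking the Fr\'echet derivative of the matrix logarithm, so your Daleckii--Krein step is required only under the latter reading of the notation; since your diagonal comparison covers both readings at once, your proof establishes the lemma under either interpretation.
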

\begin{proof}
	Independence of $\xv_0$ and $\xv_1$ and the chain rule for differentiation demonstrate 
	\[
	\frac{1}{\sqrt{1-s}}Tr\E_0\xv_0\E_1\xv^T_sf_{\alpha}^{\prime}\left(\|\xv_s\|^2- t\right)\overset{\textbf{Stein}\;\xv_0}{=}Tr\Sigma_0\E_0\frac{1}{\sqrt{1-s}}\frac{\partial}{\partial \xv_0}\E_1\xv^T_sf_{\alpha}^{\prime}\left(\|\xv_s\|^2- t\right)
	\]
	\[
	=Tr\Sigma_0\E\frac{\partial}{\partial \xv_0}\frac{\partial\xv_0}{\partial\xv_s}\xv^T_sf_{\alpha}^{\prime}\left(\|\xv_s\|^2- t\right)\overset{\textbf{chain}}{=}Tr\Sigma_0\E\frac{\partial}{\partial \xv_s}\xv^T_sf_{\alpha}^{\prime}\left(\|\xv_s\|^2- t\right)
	\]
	\[
	\overset{\textbf{Stein}\;\xv_s}{=}Tr\left(\Sigma_{0}\Sigma_s^{-1}\E\xv_s\xv^T_sf_{\alpha}^{\prime}\left(\|\xv_s\|^2- t\right)\right).
	\]
	The difference can be smoothed further applying again Stein's identity and differentiation by parts
	\[
	Tr\Sigma_{0}\Sigma_s^{-1}\E\left(\xv_s\xv^T_sf_{\alpha}^{\prime}\left(\|\xv_s\|^2- t\right)\right)
	\]
	\[
	=\frac{1}{2\alpha}Tr\Sigma_{0}\Sigma_s^{-1}\E\left(\frac{\partial}{\partial\xv_s}\xv^T_sf_{\alpha}\left(\|\xv_s\|^2- t\right)-f_{\alpha}\left(\|\xv_s\|^2- t\right)\right)
	\]
	\[
	\overset{\textbf{Stein}\;\xv_s}{=}\frac{1}{2\alpha}Tr\Sigma_{0}\Sigma_s^{-1}\E\left(\Sigma_s^{-1}\xv_s\xv^T_sf_{\alpha}\left(\|\xv_s\|^2- t\right)-f_{\alpha}\left(\|\xv_s\|^2-t\right)\right)
	\]
	\[
	=\frac{1}{2\alpha}Tr\Sigma_{0}\Sigma_s^{-1}\E\left(\Sigma_s^{-1}\xv_s\xv^T_s-I\right)f_{\alpha}\left(\|\xv_s\|^2- t\right).
	\]
	The same applied to $\xv_1$ yields analogously
	\[ 
	\frac{1}{\sqrt{s}}Tr\E\xv_1\xv^T_sf_{\alpha}^{\prime}\left(\|\xv_s\|^2- t\right)=\frac{1}{2\alpha}Tr\Sigma_{1}\Sigma_s^{-1}\E\left(\Sigma_s^{-1}\xv_s\xv^T_s-I\right)f_{\alpha}\left(\|\xv_s\|^2-t\right).
	\]
	Define  $\log^{\prime}\Sigma_{s}\eqdef\left(\Sigma_1-\Sigma_0\right)\Sigma_s^{-1}$
 	and note that the distribution of a random vector $\yv\eqdef\Sigma^{-1/2}_s\xv_s$ is standard normal then the change of variables concludes the alternative representation for $g_{\alpha}(t)$ ($\ref{E4}$).
\end{proof}
Moreover, expanding on the lemma it is possible to conclude the characterizing for the difference of multivariate Gaussian measures corollary.
\begin{corollary}\label{L1}
	Assume independent and centered Gaussian vectors $\xv_0,\xv_1\in\R^p$. Denote $\yv\sim \mathcal{N}\left(0,I\right)$ and uniform $s\sim\mathcal{U}(0,1)$ and define a linear operator
	\[
	\mathbb{L}_{\yv,s}\eqdef\frac{1}{2}\E\left(\yv^T\log^{\prime}\Sigma_{s}\yv-\log^{\prime}\det\Sigma_{s}\right)\left(\cdot\right),
	\] 
	then 
	\[
	\P\left(\xv_1\in\mathcal{B}_t\right)-\P\left(\xv_0\in\mathcal{B}_t\right)=\mathbb{L}_{\yv,s}\left(\Ind\left(\yv^T\Sigma_{s}\yv < t\right)\right).
	\]
\end{corollary}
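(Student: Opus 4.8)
The plan is to derive the corollary from Lemma~\ref{L0} by sending the smoothing parameter $\alpha\to\infty$ and identifying the limits of both sides. The starting point is the identity
\[
g_\alpha(t) = \mathbb{L}_{\yv,s}\left(f_\alpha(\yv^{T}\Sigma_s\yv - t)\right),
\]
valid for every $\alpha>0$, together with the explicit kernel $f_\alpha(x)=\Ind(x>0)-\tfrac12\textbf{sign}(x)e^{-\alpha|x|}$. First I would record the two elementary facts about this kernel: for every $x\neq 0$ one has $f_\alpha(x)\to\Ind(x>0)$ as $\alpha\to\infty$, and $|f_\alpha(x)|\leq 3/2$ uniformly in $\alpha$ and $x$.

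On the left-hand side I would write $g_\alpha(t)=\E f_\alpha(\|\xv_1\|^2-t)-\E f_\alpha(\|\xv_0\|^2-t)$ and invoke dominated convergence, using that $\|\xv_i\|^2$ has an absolutely continuous law so that the event $\{\|\xv_i\|^2=t\}$ is null. This gives
\[
g_\alpha(t)\longrightarrow \P(\|\xv_1\|^2>t)-\P(\|\xv_0\|^2>t)=\P(\xv_0\in\mathcal{B}_t)-\P(\xv_1\in\mathcal{B}_t),
\]
the last equality because $\{\xv_i\in\mathcal{B}_t\}=\{\|\xv_i\|^2<t\}$ is, up to a null set, the complement of $\{\|\xv_i\|^2>t\}$.

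Next I would pass to the limit on the right-hand side, where the genuine work lies in interchanging $\lim_{\alpha\to\infty}$ with the linear operator $\mathbb{L}_{\yv,s}$. The integrand is $\tfrac12(\yv^{T}\log^{\prime}\Sigma_s\yv-\log^{\prime}\det\Sigma_s)f_\alpha(\yv^{T}\Sigma_s\yv-t)$, whose modulus is dominated by $\tfrac34|\yv^{T}\log^{\prime}\Sigma_s\yv-\log^{\prime}\det\Sigma_s|$. Since $\Sigma_0,\Sigma_1\succ0$, the convex combination $\Sigma_s$ stays positive definite on $[0,1]$, so $\log^{\prime}\Sigma_s=(\Sigma_1-\Sigma_0)\Sigma_s^{-1}$ and $\log^{\prime}\det\Sigma_s$ are bounded in $s$; as a quadratic form in a standard Gaussian has finite moments, this dominating function is integrable against $\yv\sim\mathcal{N}(0,I)$ and $s\sim\mathcal{U}(0,1)$. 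Dominated convergence then yields
\[
\mathbb{L}_{\yv,s}\left(f_\alpha(\yv^{T}\Sigma_s\yv-t)\right)\longrightarrow \mathbb{L}_{\yv,s}\left(\Ind(\yv^{T}\Sigma_s\yv>t)\right),
\]
again because $\{\yv^{T}\Sigma_s\yv=t\}$ is null.

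Finally I would convert the ``$>$'' indicator into the ``$<$'' indicator of the statement. The key algebraic observation is that $\mathbb{L}_{\yv,s}$ annihilates constants: since $\E_{\yv}\,\yv^{T}\log^{\prime}\Sigma_s\yv=\tr(\log^{\prime}\Sigma_s)$ and, by Jacobi's formula, $\log^{\prime}\det\Sigma_s=\tr(\Sigma_s^{-1}(\Sigma_1-\Sigma_0))=\tr(\log^{\prime}\Sigma_s)$, the integrand of $\mathbb{L}_{\yv,s}(1)$ vanishes identically, whence $\mathbb{L}_{\yv,s}(1)=0$. Writing $\Ind(\yv^{T}\Sigma_s\yv>t)=1-\Ind(\yv^{T}\Sigma_s\yv<t)$ and using linearity gives $\mathbb{L}_{\yv,s}(\Ind(\yv^{T}\Sigma_s\yv>t))=-\mathbb{L}_{\yv,s}(\Ind(\yv^{T}\Sigma_s\yv<t))$. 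Equating the two limits and multiplying through by $-1$ produces the claimed identity. I expect the only delicate point to be the domination argument justifying the interchange of limit and operator; the remainder is the bookkeeping already set up in Lemma~\ref{L0} together with the constant-annihilation identity.
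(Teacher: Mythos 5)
Your proof is correct and follows essentially the same route as the paper: pass to the limit $\alpha\to\infty$ in Lemma~\ref{L0}, justify the interchange by dominated convergence (the paper invokes this without spelling out the dominating function, which you do), and then convert the indicator $\Ind\left(\yv^T\Sigma_{s}\yv>t\right)$ into $\Ind\left(\yv^T\Sigma_{s}\yv<t\right)$. One remark worth keeping: your conversion identity $\mathbb{L}_{\yv,s}\left(\Ind\left(\yv^T\Sigma_{s}\yv>t\right)\right)=-\mathbb{L}_{\yv,s}\left(\Ind\left(\yv^T\Sigma_{s}\yv<t\right)\right)$, derived from $\mathbb{L}_{\yv,s}(1)=0$ via Jacobi's formula, is exactly what makes the signs match, since the limit of $g_{\alpha}(t)$ is $\P\left(\xv_0\in\mathcal{B}_t\right)-\P\left(\xv_1\in\mathcal{B}_t\right)$; the paper's own one-line proof writes this identity \emph{without} the minus sign, which as written is a sign typo, so your more careful bookkeeping in fact repairs the published argument rather than merely reproducing it.
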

\begin{proof} Notice that
	\[
	\mathbb{L}_{\yv,s}\left(\Ind\left(\yv^T\Sigma_{s}\yv>t\right)\right)=\mathbb{L}_{\yv,s}\left(\Ind\left(\yv^T\Sigma_{s}\yv<t\right)\right),
	\]
	then Lebesgue dominated convergence theorem and the lemma [$\ref{L0}$] enable the statement in the limit $\alpha\rightarrow\infty$. 
\end{proof}
Informally, one can think of differentiation and integration by parts (Stein's identity) applied to non-smooth functions - indicator and Dirac delta function - which gives a hope to built tight or even optimal bound for Gaussian comparison (see theorem [$\ref{GComp}$]). 

The corollary helps to shift analysis from hardly accessible difference to a more constructive object. 
\begin{theorem}[Gaussian comparison]\label{GComp}
	Assume independent and centered Gaussian vectors $\xv_0,\xv_1\in\R^{p}$ with covariance operators $\Sigma_0,\Sigma_{1}\succ0$ respectively. Define uniformly upper bounded constant
	\[
	C_p=\sqrt{\P\left(\yv^TA_{s}\yv<p\right)}<1
	\]
	for $\yv\sim\mathcal{N}(0,I)$, uniformly distributed $s\sim \mathcal{U}\left(0,1\right)$ and a matrix $A_s=\frac{p\log^{\prime}\Sigma_s}{\log^{\prime}\det\Sigma_s}$.
	Then it holds
	\[
	\sup_{t}\left|\P\left(\xv_1\in\mathcal{B}_t\right)-\P\left(\xv_0\in\mathcal{B}_t\right)\right|\leq C_p\sqrt{\left|Tr\left(I-\left(\Sigma_0\Sigma_1^{-1}+\Sigma_0^{-1}\Sigma_1\right)/2\right)\right|}.
	\]
\end{theorem}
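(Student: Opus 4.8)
The plan is to start from Corollary~\ref{L1}, which already collapses the Kolmogorov distance to a single linear functional. Writing $g(t)$ for the difference $\P(\xv_1\in\mathcal{B}_t)-\P(\xv_0\in\mathcal{B}_t)$ and abbreviating $Q_s\eqdef\yv^T\log'\Sigma_s\yv-\log'\det\Sigma_s$ and $R_s\eqdef\yv^T\Sigma_s\yv$, the corollary reads $g(t)=\tfrac12\E_{s,\yv}\bigl[Q_s\Ind(R_s<t)\bigr]$, the expectation taken over $\yv\sim\mathcal{N}(0,I)$ and $s\sim\mathcal{U}(0,1)$. The first fact I would record is that $Q_s$ is centered in $\yv$: since $\log'\det\Sigma_s=\tr\log'\Sigma_s=\E_\yv\,\yv^T\log'\Sigma_s\yv$, one has $\E_\yv Q_s=0$ for every $s$.

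The heart of the argument, and the step that produces the nontrivial constant $C_p<1$ rather than the trivial factor $1$, is an extremal reduction. For fixed $s$ and \emph{any} threshold $t$, comparing the indicator to the sign of $Q_s$ gives $\E_\yv[Q_s\Ind(R_s<t)]\le\E_\yv[Q_s^+]$ and symmetrically $\ge-\E_\yv[Q_s^-]$; taking absolute values inside the $s$-expectation and then the supremum over $t$ yields $\sup_t|g(t)|\le\tfrac12\E_s\E_\yv[Q_s^+]$. Because $Q_s$ is centered, $\E_\yv[Q_s^+]=\E_\yv[Q_s^-]=\tfrac12\E_\yv|Q_s|$, so either sign of the excursion is available. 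I would then identify the relevant half-set with the event defining $C_p$: since $A_s=p\log'\Sigma_s/\log'\det\Sigma_s$ satisfies $\tr A_s=p$, the set $\{\yv^TA_s\yv<p\}$ equals $\{Q_s<0\}$ when $\log'\det\Sigma_s>0$ and $\{Q_s>0\}$ when $\log'\det\Sigma_s<0$, and the centered identity lets me route the bound through exactly this set in either case.

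With the event pinned down, two applications of Cauchy--Schwarz finish the probabilistic part. First, in $\yv$, $\E_\yv[Q_s^\pm]\le\sqrt{\E_\yv Q_s^2}\,\sqrt{\P_\yv(\yv^TA_s\yv<p)}$; then, in $s$, $\tfrac12\E_s\bigl[\sqrt{\E_\yv Q_s^2}\,\sqrt{\P_\yv(\yv^TA_s\yv<p)}\bigr]\le\tfrac12\sqrt{\E_{s,\yv}Q_s^2}\,\sqrt{\P_{s,\yv}(\yv^TA_s\yv<p)}=\tfrac12 C_p\sqrt{\E_{s,\yv}Q_s^2}$. The strict bound $C_p<1$ is immediate once one notes that $\yv^TA_s\yv$ is a nondegenerate quadratic form whose mean is $\tr A_s=p$, so it exceeds $p$ with positive probability.

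It remains to evaluate $\E_{s,\yv}Q_s^2$ in closed form. For $\yv\sim\mathcal{N}(0,I)$ the variance of the quadratic form is $\E_\yv Q_s^2=\tr\bigl((\log'\Sigma_s)^2\bigr)+\tr\bigl(\log'\Sigma_s(\log'\Sigma_s)^T\bigr)$, and the $s$-integral of the first trace collapses telescopically: using $\tfrac{d}{ds}\Sigma_s^{-1}=-\Sigma_s^{-1}(\Sigma_1-\Sigma_0)\Sigma_s^{-1}$ one gets $\tfrac{d}{ds}\tr\bigl((\Sigma_1-\Sigma_0)\Sigma_s^{-1}\bigr)=-\tr\bigl((\log'\Sigma_s)^2\bigr)$, hence $\int_0^1\tr((\log'\Sigma_s)^2)\,ds=\tr(\Sigma_0\Sigma_1^{-1}+\Sigma_0^{-1}\Sigma_1)-2p=-2\,\tr\bigl(I-(\Sigma_0\Sigma_1^{-1}+\Sigma_0^{-1}\Sigma_1)/2\bigr)$, which is nonnegative since $\tfrac12(\mu+\mu^{-1})\ge1$ for the eigenvalues $\mu$ of $\Sigma_0\Sigma_1^{-1}$. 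Feeding $\tfrac12\sqrt{\E_{s,\yv}Q_s^2}=\sqrt{|\tr(I-(\Sigma_0\Sigma_1^{-1}+\Sigma_0^{-1}\Sigma_1)/2)|}$ into the previous display then gives the stated inequality. The main obstacle I anticipate is precisely this last evaluation: the non-symmetric trace $\tr(\log'\Sigma_s(\log'\Sigma_s)^T)$ coincides with $\tr((\log'\Sigma_s)^2)$, so that the integral closes exactly to the claimed formula, when $\Sigma_0$ and $\Sigma_1$ commute; in the non-commuting case one must argue that this cross term contributes the same $s$-integral (or otherwise absorb the discrepancy), which is the delicate point of the proof.
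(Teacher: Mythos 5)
Your proposal follows the paper's own route step for step: the extremal reduction from $\Ind(R_s<t)$ to the negativity set of $Q_s$ (using $\E_{\yv}Q_s=0$), Cauchy--Schwarz against $\P(\yv^{\T}A_s\yv<p)$, and the telescoping identity $\tr\bigl((\log^{\prime}\Sigma_s)^2\bigr)=-\frac{d}{ds}\tr\bigl((\Sigma_1-\Sigma_0)\Sigma_s^{-1}\bigr)$. Up to the variance evaluation it is sound, and you are in fact more careful than the paper about the sign of $\log^{\prime}\det\Sigma_s$ when identifying the extremal set with $\{\yv^{\T}A_s\yv<p\}$. However, the ``delicate point'' you flag at the end is a genuine gap, and neither of the escapes you suggest can close it. Writing $M_s=\log^{\prime}\Sigma_s$, one has pointwise in $s$
\[
\tr\bigl(M_sM_s^{\T}\bigr)-\tr\bigl(M_s^2\bigr)=\tfrac{1}{2}\tr\bigl((M_s-M_s^{\T})(M_s-M_s^{\T})^{\T}\bigr)\geq 0,
\]
with strict inequality (for every $s$) precisely when $\Sigma_0$ and $\Sigma_1$ do not commute. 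So the cross term neither equals $\tr(M_s^2)$ nor ``contributes the same $s$-integral'': its integral is strictly \emph{larger}. For instance, $\Sigma_0=\diag(1,2)$, $\Sigma_1=\Matrix{2}{1}{1}{2}$ gives $\int_0^1\tr(M_s^2)\,ds=1$ by telescoping, while $\int_0^1\tr(M_sM_s^{\T})\,ds=1+\int_0^1(2+2s-s^2)^{-2}\,ds>1$. Consequently, under the literal (non-symmetric) reading of $\yv^{\T}\log^{\prime}\Sigma_s\yv$, your argument only proves the inequality with a strictly worse constant; the discrepancy cannot simply be absorbed.

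The missing idea is a symmetrization that costs nothing. For $\yv\sim\mathcal{N}(0,I)$, any matrix $B$ and any test function $h$, the functional $\E\bigl[\yv^{\T}B\yv\, h(\yv^{\T}\Sigma_s\yv)\bigr]$ depends on $B$ only through its diagonal in an orthonormal eigenbasis of $\Sigma_s$: diagonalize $\Sigma_s$ and note that every off-diagonal term vanishes under the sign flip $y_i\mapsto-y_i$. Conjugation by any power of $\Sigma_s$ preserves this diagonal, so $M_s=(\Sigma_1-\Sigma_0)\Sigma_s^{-1}$ may be replaced throughout Corollary~\ref{L1} --- and hence in your $Q_s$, in the extremal event, and in the constant $C_p$ --- by its symmetric conjugate $\tilde{M}_s\eqdef\Sigma_s^{-1/2}(\Sigma_1-\Sigma_0)\Sigma_s^{-1/2}=\Sigma_s^{-1/2}M_s\Sigma_s^{1/2}$, without changing the value of the functional. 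Since $\tilde{M}_s$ is symmetric and similar to $M_s$, there is no cross term at all: $\E_{\yv}Q_s^2=2\tr(\tilde{M}_s^2)=2\tr(M_s^2)$ with no commutativity assumption, and your telescoping computation then yields exactly $\frac12\sqrt{\E_{s,\yv}Q_s^2}=\sqrt{\bigl|\tr\bigl(I-(\Sigma_0\Sigma_1^{-1}+\Sigma_0^{-1}\Sigma_1)/2\bigr)\bigr|}$, i.e.\ the stated bound with the stated constant. This invariance is what the paper's terse phrase ``cross terms cancel out'' conceals, and it is the one ingredient your write-up is missing.
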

\begin{proof} 
	The corollary [$\ref{L1}$] implies
	\[
	\sup_{t}\left|\P\left(\xv_1\in\mathcal{B}_t\right)-\P\left(\xv_0\in\mathcal{B}_t\right)\right|\leq\left|\frac{1}{2}\int_0^1\E\left(\yv^T\log^{\prime}\Sigma_{s}\yv-\log^{\prime}\det\Sigma_{s}\right)\Ind\left(\yv^T\log^{\prime}\Sigma_{s}\yv<\log^{\prime}\det\Sigma_{s}\right)ds\right|
	\]
	then Cauchy-Schwartz inequality gives
	\[
	\sup_{t}\left|\P\left(\xv_1\in\mathcal{B}_t\right)-\P\left(\xv_0\in\mathcal{B}_t\right)\right|
	\]
	\[
	\leq\frac{1}{2}\sqrt{\P\left(\yv^T\log^{\prime}\Sigma_{s}\yv/\log^{\prime}\det\Sigma_{s}<1\right)}\sqrt{\int_0^1\E\left(\yv^T\log^{\prime}\Sigma_{s}\yv-\log^{\prime}\det\Sigma_{s}\right)^2ds}.
	\]
	Observe additionally that $\forall s\in\left(0,1\right)$ - $Tr\left(\log^{\prime}\Sigma_s\right)^2=-Tr\left(\log^{\prime\prime}\Sigma_s\right)$, then cross terms cancel out and one attains
	\[
	\sqrt{2\int_0^1\E\left(\yv^T\log^{\prime}\Sigma_{s}\yv-\log^{\prime}\det\Sigma_{s}\right)^2ds}=\sqrt{2\int_0^1Tr\left(\log^{\prime}\Sigma_s\right)^2ds}
	\]
	\[
	=\sqrt{2\left|Tr\int_0^1\left(\log\Sigma_s\right)^{\prime\prime}ds\right|}=\sqrt{2\left|Tr\left(\log^{\prime}\Sigma_s\Big|_{s=1}-\log^{\prime}\Sigma_s\Big|_{s=0}\right)\right|}
	\]
	\[
	=2\sqrt{\left|Tr\left(I-\left(\Sigma_0\Sigma_1^{-1}+\Sigma_0^{-1}\Sigma_1\right)/2\right)\right|}.
	\]
 	The statement readily follows uniformly in dimension.
\end{proof}
\begin{rmrk}
	Notice that the theorem has no condition $p>2$ typical for a density based approximation of Kolmogorov distance.
\end{rmrk}
\begin{rmrk}
	Also observe that $Tr A_s=p$ and for a fixed $s$ if there exists $\delta\neq0$ such that $\left|A_s-I\right|\succ \delta I$ then the limit $\lim_{p\rightarrow\infty}p^k\P\left(\yv^TA_{s}\yv<p\right)=0$ tends to zero as there is always a gap $\delta_s>0$ s.t. $\P\left(\yv^TA_{s}\yv<p\right)=\P\left(\|\yv\|^2<p-\delta_s\right)$.
\end{rmrk}
A useful implication of the result [$\ref{L1}$] is the anti-concentration bound. With an a-priori chosen shift $\Delta>0$ it is equivalent to bounding Kolmogorov distance with scaled covariances
\begin{equation}\label{S}
	\P\left(\xv\in\mathcal{B}_{t+\Delta}\right)-\P\left(\xv\in\mathcal{B}_{t}\right)
\end{equation}
\[
\leq\sup_{t^{\prime}}\left|\P\left(\sqrt{\left(1+\Delta/t\right)}\xv\in\mathcal{B}_{t^{\prime}}\right)-\P\left(\xv\in\mathcal{B}_{t^{\prime}}\right)\right|=\sup_{t}\left|\P\left(\xv_0\in\mathcal{B}_{t}\right)-\P\left(\xv_1\in\mathcal{B}_{t}\right)\right|
\]
with $\Sigma_0=\Sigma$ and$\Sigma_1=\left(1+\Delta/t\right)\Sigma$. Even though the theorem [$\ref{GComp}$] straightforwardly applies let us justify the anti-concentration independently to cross-validate the comparison result.
\begin{theorem}[Anti-concentration]\label{AC1}
	For a centered Gaussian vector $\xv\sim\mathcal{N}\left(0,\Sigma\right)$ in $\R^p$
	\[
	\P\left(\xv\in\mathcal{B}_{t+\Delta}\right)-\P\left(\xv\in\mathcal{B}_{t}\right)\leq Cp^{1/2}\left(\log\left(1+\Delta/t\right)\wedge\sqrt{\log\left(1+\Delta/t\right)}\right)
	\]
	with a universal constant $C<\infty$.
\end{theorem}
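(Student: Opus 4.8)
The plan is to bypass the comparison theorem entirely and work directly with the law of $Q \eqdef \|\xv\|^2$, exploiting the scale invariance of the window: the logarithmic change of variable turns the additive gap $[t,t+\Delta]$ into an interval of length $\log(1+\Delta/t)$ on the $\log$-axis, and the two branches of the $\wedge$ will come out of pairing a single density bound with the trivial estimate $\P(\cdot)\le 1$.

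Diagonalising $\Sigma = U\operatorname{diag}(\sigma_1,\dots,\sigma_p)U^{\T}$ I would write $Q = \sum_i \sigma_i z_i^2$ with $z_i$ i.i.d.\ standard normal, so that the left-hand side equals $\P(Q\in[t,t+\Delta]) = \int_t^{t+\Delta} p_Q(u)\,du$ with $p_Q$ the density of $Q$. Substituting $u = e^r$ gives
\[
\P(Q\in[t,t+\Delta]) = \int_{\log t}^{\log(t+\Delta)} e^r p_Q(e^r)\,dr,
\]
which is the mass placed by $\log Q$ on an interval of length $w\eqdef\log(1+\Delta/t)$; the integrand $e^r p_Q(e^r)$ is exactly the density of $\log Q$, whose supremum is $\sup_{u>0} u\,p_Q(u)$.

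The core estimate is the dimension-uniform bound $\sup_{u>0} u\,p_Q(u)\le C\sqrt p$ for every $\Sigma\succ 0$. I would derive it from the fact that $p_Q$ is (in the regime of comparable eigenvalues) log-concave, whence $\sup_u p_Q(u)\le(\Var Q)^{-1/2}$ with $\Var Q = 2\operatorname{Tr}\Sigma^2$, while the weight $u$ is effectively capped at the mean $\E Q = \operatorname{Tr}\Sigma$ because $Q$ concentrates; this yields
\[
\sup_{u>0} u\,p_Q(u) \;\lesssim\; \frac{\operatorname{Tr}\Sigma}{\sqrt{\operatorname{Tr}\Sigma^2}} \;=\; \sqrt{r_{\mathrm{eff}}(\Sigma)} \;\le\; \sqrt p ,
\]
the last step being Cauchy--Schwarz $(\operatorname{Tr}\Sigma)^2\le p\,\operatorname{Tr}\Sigma^2$. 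With this, the display above gives $\P(Q\in[t,t+\Delta])\le C\sqrt p\,w$. Combining it with $\P(\cdot)\le 1$ through the elementary inequality $\min(C\sqrt p\,w,1)\le C\sqrt p\,(w\wedge\sqrt w)$ (valid once $C\ge 1$: for $w\le 1$ the minimum is $w$ and the density bound is already of the stated form, while for $w>1$ one has $C\sqrt p\,\sqrt w\ge 1$ so the trivial bound suffices), and recalling $w = \log(1+\Delta/t)$, the asserted estimate follows with a universal $C$.

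The main obstacle is to make $\sup_u u\,p_Q(u)\le C\sqrt p$ rigorous for \emph{every} $\Sigma$: a single coordinate $\sigma_i z_i^2$ carries the non-log-concave $\chi^2_1$ density with its $u^{-1/2}$ singularity, so $p_Q$ may fail to be log-concave when $\Sigma$ has isolated dominant eigenvalues. I expect to split the argument into a local and a global part --- peeling off two coordinates already smooths the singularity, since $\int_0^w (a(w-a))^{-1/2}\,da = \pi$ shows the convolution of two $\chi^2_1$ densities is bounded, while the remaining sum is log-concave and supplies the concentration that caps the weight $u$ at the scale $\operatorname{Tr}\Sigma$. Reconciling the constants produced by these two regimes into the single factor $\sqrt p$, uniformly in the eigenvalue configuration, is the delicate point.
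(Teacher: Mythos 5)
Your reduction itself is sound: with $Q\eqdef\|\xv\|^2$ and $p_Q$ its density, the substitution $u=e^r$ and the trivial bound $\P(\cdot)\le 1$ do convert the dimension-uniform estimate $\sup_{u>0}u\,p_Q(u)\le C\sqrt{p}$ into the stated theorem, including the $\wedge$. The problem is that this key estimate carries \emph{all} of the difficulty --- it is precisely Corollary~\ref{D} of the paper, which the paper \emph{deduces from} Theorem~\ref{AC1}, so it cannot be cited here --- and your proposed derivation of it collapses. First, any decoupled bound of the form $\sup_u u\,p_Q(u)\le(\text{weight cap})\cdot\sup_u p_Q(u)$ cannot work even in principle: already for $p=1$, $p_Q(u)=(2\pi\sigma u)^{-1/2}e^{-u/(2\sigma)}$ has $\sup_u p_Q(u)=\infty$ while $\sup_u u\,p_Q(u)<\infty$; finiteness relies on the density being large only where the weight is small, which a product of two separate suprema cannot capture. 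Second, your fallback is also false: after peeling off two coordinates, the remaining sum of heterogeneously weighted $\chi^2_1$'s is still not log-concave in general. Indeed, for $\sigma_1\gg\sigma_2$ the density of $\sigma_1z_1^2+\sigma_2z_2^2$ behaves like $u^{-1/2}e^{-u/(2\sigma_1)}$ on the range $\sigma_2\ll u\ll\sigma_1$, and $\frac{d^2}{du^2}\bigl(-\tfrac12\log u-\tfrac{u}{2\sigma_1}\bigr)=\tfrac{1}{2u^2}>0$, so the log-density is convex there. Third, the intermediate inequality your heuristic produces, $\sup_u u\,p_Q(u)\lesssim\tr\Sigma/\sqrt{\tr\Sigma^2}=\sqrt{r_{\mathrm{eff}}(\Sigma)}$, is provably wrong: take $\Sigma=\diag(p,1,\dots,1)$, so $Q=pz_0^2+Y$ with $Y\sim\chi^2_{p-1}$. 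Then $\tr\Sigma/\sqrt{\tr\Sigma^2}\to 2$, yet at $u=p$, writing $W=p-Y$ (mean $1$, standard deviation of order $\sqrt{p}$, density $\asymp p^{-1/2}$ near $0$), one gets $p_Q(p)\asymp p^{-1/2}\,\E\bigl[W^{-1/2}\Ind(W>0)\bigr]\asymp p^{-1/2}\cdot p^{-1/4}$, hence $u\,p_Q(u)\asymp p^{1/4}\to\infty$ while the effective rank stays bounded. (The target bound $C\sqrt{p}$ survives this example, but no bound through $\sqrt{r_{\mathrm{eff}}}$ does, and no argument resting on $\sup_u p_Q(u)\le(\Var Q)^{-1/2}$ can: here $\sup_u p_Q(u)\asymp p^{-3/4}\gg(\Var Q)^{-1/2}\asymp p^{-1}$.)

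For contrast, the paper's proof never touches the density. It applies Corollary~\ref{L1} with $\xv_0=\xv$, $\xv_1=\beta^{1/2}\xv$, so that $\Sigma_s=\beta_s\Sigma$ and $(\log\Sigma_s)^{\prime}$ is the scalar $(\log\beta_s)^{\prime}$; the comparison identity then collapses to $\frac12\bigl|\int_0^1(\log\beta_s)^{\prime}\,\E(\|\yv\|^2-p)\Ind(\yv^T\Sigma\yv<t/\beta_s)\,ds\bigr|\le\frac12\,\E\bigl|\|\yv\|^2-p\bigr|\log\beta\le\sqrt{p/2}\,\log\beta$, with the $\sqrt{\log\beta}$ branch essentially coming from the triviality of the bound once $\log\beta\ge1$ --- a three-line argument given Corollary~\ref{L1}. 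The only route in the paper to your key estimate that avoids Theorem~\ref{AC1} is Theorem~\ref{CHD}, $t\,\rho_{\|\xv\|^2}(t)=\frac12\E\bigl[(p-\|\yv\|^2)\Ind(\yv^T\Sigma\yv<t)\bigr]\le\sqrt{p/2}$, but that is again the Stein-identity comparison machinery you set out to bypass. As it stands, your proposal reduces the theorem to a statement exactly as strong, and the independent proof you sketch for that statement fails.
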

\begin{proof}
	Choose $\beta>1$, a covariance $\Sigma\succ 0$ and define 
	\[
	\xv_0=\xv,\;\;\xv_1=\beta^{1/2}\xv,\;\;\beta_{s}=(1-s)+s\beta\;\;\text{and}\;\;\Sigma_{s}=(1-s)\Sigma+s\beta\Sigma.
	\]
	The corollary [$\ref{L1}$] claims that
	\[
	\sup_{t}\left|\P\left(\xv_1\in\mathcal{B}_t\right)-\P\left(\xv_0\in\mathcal{B}_t\right)\right|=\sup_t\left|\frac{1}{2}Tr\int_0^1\E\left(\yv\yv^T-I\right)\left(\log\Sigma_{s}\right)^{\prime}\Ind\left(\yv^T\Sigma_{s}\yv < t\right)ds\right|,
	\]
	which in the particular case is written as
	\[
	\sup_{t}\left|\P\left(\beta^{1/2}\xv\in\mathcal{B}_t\right)-\P\left(\xv\in\mathcal{B}_t\right)\right|=\sup_t\left|\frac{1}{2}\int_0^1\left(\log\beta_{s}\right)^{\prime}\E\left(\|\yv\|^2-p\right)\Ind\left(\yv^T\Sigma\yv < t/\beta_{s}\right)ds\right|.
	\]
	One had to exploit the logarithmic structure $\left(\log\Sigma_{s}\right)^{\prime}=\left(\log\beta_s\right)^{\prime}$ to attain the expression. Therefore, it holds
	\[
	\sup_{t}\left|\P\left(\beta^{1/2}\xv\in\mathcal{B}_t\right)-\P\left(\xv\in\mathcal{B}_t\right)\right|\leq\frac{1}{2}\E\left|\|\yv\|^2-p\right|\left|\int_0^1\left(\log\beta_{s}\right)^{\prime}ds\right|
	\]
	\[
	\leq\frac{1}{2}\E\left|\|\yv\|^2-p\right|\log\beta\leq C p^{1/2}\left(\log\beta\wedge\sqrt{\log\beta}\right)
	\]
	with a universal constant
	\[
	\forall p\;:\;C>\E\left|\frac{\|\yv\|^2-p}{2\sqrt{p}}\right|\vee\E\left|\frac{\|\yv\|^2\log\beta-p\log\beta}{2\sqrt{p\log\beta}}\right|.
	\]
	To complete the derivation it suffice to insert $\beta\eqdef1+\Delta/t$ and use the observation ($\ref{S}$) in front of the corollary.
\end{proof}
There exists an alternative technique reproducing derived anti-concentration. It provides a convenient instrument to validate the characterization [$\ref{L1}$].
\begin{theorem}\label{PAC}
	For a Gaussian vector $\xv\in\R^p$ with a covariance $\Sigma\succ 0$ it holds
	\[
	\P\left(\xv\in\mathcal{B}_{t+\Delta}\right)-\P\left(\xv\in\mathcal{B}_{t}\right)\leq\sqrt{p}\Delta/t.
	\]
\end{theorem}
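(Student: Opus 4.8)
The plan is to give a proof that is independent of Corollary~\ref{L1}, working directly with the law of the squared norm, so that the coincidence of the two anti-concentration bounds genuinely cross-validates the characterization. Write $S\eqdef\|\xv\|^2$ and let $q$ denote its density, which exists and is continuous on $(0,\infty)$ since $\Sigma\succ0$. Then the left-hand side is simply
\[
\P\left(\xv\in\mathcal{B}_{t+\Delta}\right)-\P\left(\xv\in\mathcal{B}_{t}\right)=\P\left(t\leq S<t+\Delta\right)=\int_t^{t+\Delta}q(u)\,du.
\]
The factor $\Delta/t$ in the target suggests passing to the logarithmic scale: since $\int_t^{t+\Delta}u^{-1}\,du=\log(1+\Delta/t)\leq\Delta/t$, it suffices to establish the scale-invariant pointwise estimate $u\,q(u)\leq\sqrt{p}$ for every $u>0$, i.e. that the density of $W\eqdef\log S$ is bounded by $\sqrt{p}$.

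The main step, and the point I expect to be the crux, is to reduce this density bound for a general covariance $\Sigma$ to the isotropic case. Writing $\xv=\Sigma^{1/2}\yv$ with $\yv\sim\mathcal{N}(0,I)$ gives $S=\yv^T\Sigma\yv$; decomposing $\yv=R\omega$ into its radius $R=\|\yv\|$ and direction $\omega=\yv/\|\yv\|$ then yields the factorisation
\[
S=R^2\,\omega^T\Sigma\,\omega\eqdef R^2 Q,
\]
where $R^2\sim\chi^2_p$ and $Q=\omega^T\Sigma\omega$ are \emph{independent}. Hence $W=\log R^2+\log Q$ is a sum of two independent variables, and the density $q_W$ of a convolution is dominated by the supremum of either factor's density, so
\[
\sup_w q_W(w)\leq\sup_v p_{\log R^2}(v)=\sup_{u>0}u\,p_{\chi^2_p}(u).
\]
In this way the angular part $Q$, which carries all the dependence on the shape of $\Sigma$, is discarded and one is left with a purely isotropic one-dimensional quantity; this is the genuine content of the argument.

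Finally I would evaluate the isotropic maximum explicitly. With $p_{\chi^2_p}(u)=\bigl(2^{p/2}\Gamma(p/2)\bigr)^{-1}u^{p/2-1}e^{-u/2}$ the function $u\,p_{\chi^2_p}(u)$ is maximised at $u=p$, with value $(p/2)^{p/2}e^{-p/2}/\Gamma(p/2)$; the elementary Stirling bound $\Gamma(k)\geq\sqrt{2\pi}\,k^{k-1/2}e^{-k}$ turns this into $\sqrt{p}/(2\sqrt{\pi})<\sqrt{p}$. Combining $u\,q(u)\leq\sqrt{p}$ with the logarithmic estimate above closes the proof (indeed with room to spare). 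The only delicate points are the justification of the radial--angular independence, which is standard for the isotropic Gaussian and survives the degenerate case $\Sigma=cI$ where $Q$ becomes a point mass, and the existence and use of the density $q$; both are routine, so the whole estimate rests on the single convolution reduction and the $\chi^2_p$ computation.
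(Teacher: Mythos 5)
Your proof is correct, but it takes a genuinely different route from the paper. The paper's proof is information-theoretic: it views the shift \(t\mapsto t+\Delta\) as a rescaling of the covariance, applies Pinsker's inequality to bound the left-hand side by \(\sqrt{\mathcal{K}(\P_1,\P_2)/2}\) with \(\P_1=\mathcal{N}(0,\Sigma)\) and \(\P_2\) a scaled copy of it, and then computes the Kullback--Leibler divergence explicitly (it depends only on \(p\) and the scaling factor, not on \(\Sigma\)), bounding it by \(p(\Delta/t)^2\). Your argument instead works directly with the law of \(S=\|\xv\|^2\): the radial--angular factorisation \(S=R^2Q\) with \(R^2\sim\chi^2_p\) independent of \(Q=\omega^T\Sigma\omega\), the convolution bound for the density of \(\log S=\log R^2+\log Q\), and the Stirling estimate for the mode of \(u\,p_{\chi^2_p}(u)\) are all sound (the convolution step needs only that one summand has a bounded density, so the point-mass case \(\Sigma=cI\) is indeed harmless). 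Each approach has its advantages. The paper's proof is three lines given Pinsker and isolates exactly where the dimension enters (through the KL divergence of scaled Gaussians); it also extends verbatim to any pair of measures whose divergence is computable. Your proof is self-contained, yields the sharper bound \(\frac{\sqrt{p}}{2\sqrt{\pi}}\log(1+\Delta/t)\) --- improving both the constant and the dependence on \(\Delta/t\) --- and, most notably, establishes the pointwise estimate \(\sup_{u>0}u\,q(u)\leq\sqrt{p}/(2\sqrt{\pi})\) for the density of \(\|\xv\|^2\), which is precisely Corollary~\ref{D} of the paper with an explicit universal constant, obtained here without passing through Theorem~\ref{AC1}. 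In that sense your argument proves strictly more than Theorem~\ref{PAC}, while still serving the stated purpose of cross-validating Corollary~\ref{L1} by an independent method.
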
    
\begin{proof}
	Pinsker's inequality states
	\[
	\P\left(\xv\in\mathcal{B}_{t+\Delta}\right)-\P\left(\xv\in\mathcal{B}_{t}\right)\leq\sqrt{\mathcal{K}(\P_1,\P_2)/2}.
	\]
	Explicitly, the Kullback-Leibler divergence between $\P_1=\mathcal{N}\left(0,\Sigma\right)$ and $\P_2=\mathcal{N}\left(0,\frac{t^2}{(t+\Delta)^2}\Sigma\right)$ is written
	\[
	\mathcal{K}(\P_1,\P_2)=p/2((\Delta/t)^2+2(\Delta/t)-2\log(1-\Delta/t))\leq p(\Delta/t)^2.
	\]
	The combination of the two concludes $\P\left(\xv\in\mathcal{B}_{t+\Delta}\right)-\P\left(\xv\in\mathcal{B}_{t}\right)\leq \sqrt{p}\Delta/t$.
\end{proof}
The result approximately matches the corollary [$\ref{AC1}$] and an exact relation is explained via an inequality
\[
\P\left(\xv\in\mathcal{B}_{t+\Delta}\right)-\P\left(\xv\in\mathcal{B}_{t}\right)\overset{corollary \ref{AC1}}{<}C\sqrt{p}\log\left(1+\Delta/t\right)<C\sqrt{p}\Delta/t.
\]
Up to a constant factor the corollary [$\ref{AC1}$] is sharper than the theorem [$\ref{PAC}$]. However, they match in order and no significant improvement is gained compared to Pinsker's inequality.

Interestingly the theorem [$\ref{AC1}$] can be used to bound a pdf $\rho_{\|\xv\|^2}(t)\eqdef\E\delta\left(\|\xv\|^2-t\right)$ of squared Eucledian norm of a Gaussian vector 
\[
\rho_{\|\xv\|^2}(t)=\lim_{\Delta\rightarrow 0^+}\frac{\P\left(\xv\in\mathcal{B}_{t+\Delta}\right)-\P\left(\xv\in\mathcal{B}_{t}\right)}{\Delta}\leq\lim_{\Delta\rightarrow 0^+}\frac{Cp^{1/2}\log\left(1+\Delta/t\right)}{\Delta}=C\frac{\sqrt{p}}{t}.
\]  
\begin{corollary}\label{D}
	A Gaussian vector $\xv\sim\mathcal{N}\left(0,\Sigma\right)$ follows 
	\[
	\rho_{\|\xv\|^2}(t)\leq C\sqrt{p}/t
	\] 
	with a universal constant $C<\infty$.
\end{corollary}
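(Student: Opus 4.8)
The plan is to realise the density $\rho_{\|\xv\|^2}(t)$ as the right-hand derivative of the distribution function $t\mapsto\P(\xv\in\mathcal{B}_t)$ and then control that derivative directly through the anti-concentration estimate of Theorem \ref{AC1}. First I would record that, since $\Sigma\succ0$, the quadratic form $\|\xv\|^2=\xv^T\xv$ is a non-degenerate positive-definite form in a non-degenerate Gaussian vector, so its law is a generalised $\chi^2$ distribution, absolutely continuous on $(0,\infty)$ with a continuous bounded density. Consequently the distribution function is differentiable and the density coincides with the difference quotient
\[
\rho_{\|\xv\|^2}(t)=\lim_{\Delta\rightarrow 0^+}\frac{\P(\xv\in\mathcal{B}_{t+\Delta})-\P(\xv\in\mathcal{B}_{t})}{\Delta}.
\]

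Next I would insert the bound from Theorem \ref{AC1} into the numerator, which majorises it by $Cp^{1/2}\bigl(\log(1+\Delta/t)\wedge\sqrt{\log(1+\Delta/t)}\bigr)$. The decisive step is the small-$\Delta$ behaviour of the two competing terms in the minimum: as $\Delta\to0^+$ the ratio $\Delta/t\to0$, so $\log(1+\Delta/t)\to0$, and for small positive $x$ the inequality $x\leq\sqrt{x}$ shows that the minimum is attained by the logarithmic branch. I would then use the elementary expansion $\log(1+\Delta/t)=\Delta/t+o(\Delta)$, so that after dividing by $\Delta$ and passing to the limit one gets $\lim_{\Delta\to0^+}\log(1+\Delta/t)/\Delta=1/t$. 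Combining these observations yields $\rho_{\|\xv\|^2}(t)\leq C\sqrt{p}/t$ with the same universal constant $C$ supplied by Theorem \ref{AC1}.

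The only genuinely delicate point is the first one: one must be sure that the density exists and actually equals the displayed difference quotient rather than merely being dominated by its $\limsup$. For the non-degenerate Gaussian case this follows from absolute continuity of the generalised $\chi^2$ law, which guarantees differentiability of the distribution function and identifies the limit with $\E\delta(\|\xv\|^2-t)$ in the distributional sense. Everything downstream is routine real-analysis asymptotics of $\log(1+x)$; in particular no further work on the Gaussian comparison machinery of Corollary \ref{L1} or Theorem \ref{GComp} is required, since Theorem \ref{AC1} already carries both the dimensional factor $\sqrt{p}$ and the $1/t$ scaling.
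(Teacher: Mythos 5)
Your proposal is correct and takes essentially the same route as the paper: the paper likewise writes $\rho_{\|\xv\|^2}(t)$ as the limit of the difference quotient $\bigl(\P(\xv\in\mathcal{B}_{t+\Delta})-\P(\xv\in\mathcal{B}_{t})\bigr)/\Delta$, majorises the numerator by the bound of Theorem \ref{AC1} (tacitly using the logarithmic branch of the minimum, exactly as you argue), and evaluates $\lim_{\Delta\rightarrow 0^+}\log(1+\Delta/t)/\Delta=1/t$. Your only addition is the explicit justification that the density exists and coincides with this one-sided derivative (via absolute continuity of the generalised $\chi^2$ law), a point the paper leaves implicit.
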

Moreover, the corollary [$\ref{L1}$] additionally enables a structural characterization of the density.
 \begin{theorem}\label{CHD}
 	A Gaussian vector $\xv\sim\mathcal{N}\left(0,\Sigma\right)$ follows 
 	\[
 	\rho_{\|\xv\|^2}(t)=\frac{1}{2t}\E_{\yv^T\Sigma\yv<t}\left(p-\|\yv\|^2\right)
 	\] 
 	with the standard normal $\yv\sim\mathcal{N}\left(0,I\right)$.
 \end{theorem}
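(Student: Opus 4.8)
The plan is to obtain the density as the derivative of the distribution function $F(t)\eqdef\P(\xv\in\mathcal{B}_t)=\P(\|\xv\|^2<t)$ and to compute that derivative through the same scaling family used in the anti-concentration argument. First I would reuse the construction from the proof of Theorem~[\ref{AC1}]: set $\xv_0=\xv$ and $\xv_1=\beta^{1/2}\xv$ with $\beta>1$, so that $\Sigma_s=\beta_s\Sigma$ with $\beta_s=1+s(\beta-1)$. The logarithmic derivatives then collapse to scalars, $\log'\Sigma_s=(\log\beta_s)'I$ and $\log'\det\Sigma_s=p(\log\beta_s)'$, and the corollary~[\ref{L1}] yields the exact identity
\[
F(t/\beta)-F(t)=\frac{1}{2}\int_0^1(\log\beta_s)'\,\E\left(\|\yv\|^2-p\right)\Ind\left(\yv^T\Sigma\yv<t/\beta_s\right)ds,
\]
where I have used $\P(\beta^{1/2}\xv\in\mathcal{B}_t)=\P(\|\xv\|^2<t/\beta)=F(t/\beta)$.

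Next I would divide by $\beta-1$ and let $\beta\to 1^+$. Since $(\log\beta_s)'=(\beta-1)/\beta_s$, the factor $\beta-1$ cancels and the integrand reduces to $\beta_s^{-1}\E(\|\yv\|^2-p)\Ind(\yv^T\Sigma\yv<t/\beta_s)$, which converges pointwise in $s$ to $\E(\|\yv\|^2-p)\Ind(\yv^T\Sigma\yv<t)$ as $\beta_s\to 1$. On the left, the chain rule gives $\lim_{\beta\to 1^+}(F(t/\beta)-F(t))/(\beta-1)=-t\,F'(t)=-t\,\rho_{\|\xv\|^2}(t)$. Equating the two limits and rearranging produces
\[
\rho_{\|\xv\|^2}(t)=\frac{1}{2t}\,\E\left(p-\|\yv\|^2\right)\Ind\left(\yv^T\Sigma\yv<t\right)=\frac{1}{2t}\,\E_{\yv^T\Sigma\yv<t}\left(p-\|\yv\|^2\right),
\]
which is the claimed identity.

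The step I expect to require the most care is justifying the interchange of the limit $\beta\to 1^+$ with the $s$-integration and the expectation over $\yv$. Here I would invoke dominated convergence: for $\beta$ near $1$ only the threshold $t/\beta_s$ varies within a compact $t$-interval, while $|\,\|\yv\|^2-p\,|$ is integrable against the standard Gaussian with $\E|\|\yv\|^2-p|\le\sqrt{2p}$, furnishing a dominating function uniformly in $s\in[0,1]$ and in $\beta$ near $1$. The same bound incidentally recovers Corollary~[\ref{D}], since $\tfrac{1}{2}\E|p-\|\yv\|^2|\le\sqrt{p/2}$. Differentiability of $F$ is not itself an obstacle, as $\|\xv\|^2$ has a smooth generalized chi-squared density on $(0,\infty)$, so the forward derivative used to define $\rho_{\|\xv\|^2}$ coincides with $F'(t)$ and with the one-sided limit computed above.
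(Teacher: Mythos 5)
Your proof is correct and follows essentially the same route as the paper: both apply Corollary [$\ref{L1}$] to the scaling family $\Sigma_s=\beta_s\Sigma$, exploit the collapse $\log^{\prime}\Sigma_s=(\log\beta_s)^{\prime}I$ and $\log^{\prime}\det\Sigma_s=p(\log\beta_s)^{\prime}$, and recover the density by letting the scale parameter tend to $1$. The only differences are cosmetic — the paper parametrizes through $\Delta$ with $\beta=(1+\Delta/t)^{-1}\rightarrow 1^-$ while you send $\beta\rightarrow 1^+$ and identify the limit via the chain rule, and you supply an explicit dominated-convergence justification that the paper leaves implicit.
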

\begin{proof}
	Once again fix $\beta>1$, a covariance $\Sigma\succ 0$ and define 
	\[
	\xv_0=\xv,\;\;\xv_1=\beta^{1/2}\xv,\;\;\beta_{s}=(1-s)+s\beta\;\;\text{and}\;\;\Sigma_{s}=(1-s)\Sigma+s\beta\Sigma.
	\]
	The corollary [$\ref{L1}$] claims that
	\[
	\P\left(\xv_1\in\mathcal{B}_{t}\right)-\P\left(\xv_0\in\mathcal{B}_t\right)=\frac{1}{2}Tr\int_0^1\E\left(\yv\yv^T-I\right)\left(\log\Sigma_{s}\right)^{\prime}\Ind\left(\yv^T\Sigma_{s}\yv < t\right)ds,
	\]
	which is written here as
	\[
	\P\left(\xv\in\mathcal{B}_{t+\Delta}\right)-\P\left(\xv\in\mathcal{B}_t\right)=\P\left(\left(1+\Delta/t\right)^{-1/2}\xv\in\mathcal{B}_{t}\right)-\P\left(\xv\in\mathcal{B}_{t}\right)
	\]
	\[
	=\frac{1}{2}\int_0^1\left(\log\beta_{s}\right)^{\prime}\E\left(\|\yv\|^2-p\right)\Ind\left(\beta_{s}\yv^T\Sigma\yv < t\right)ds.
	\]
	choosing $\beta\eqdef\left(1+\Delta/t\right)^{-1}$ and using the logarithmic structure $\left(\log\Sigma_{s}\right)^{\prime}=\left(\log\beta_s\right)^{\prime}$. 
	
	Therefore, taking the limit $\Delta\rightarrow 0$ one finds a description of the probability density function
	\[
	\rho_{\|\xv\|^2}(t)=\lim_{\Delta\rightarrow 0^+}\frac{\P\left(\xv\in\mathcal{B}_{t+\Delta}\right)-\P\left(\xv\in\mathcal{B}_{t}\right)}{\Delta}
	\]
	\[
	=\lim_{\beta\rightarrow 1^-}\frac{\beta-1}{2t\left(1/\beta-1\right)}\int_0^1\frac{1}{\beta_{s}}\E\left(\|\yv\|^2-p\right)\Ind\left(\beta_{s}\yv^T\Sigma\yv < t\right)ds=\frac{1}{2t}\E\left(p-\|\yv\|^2\right)\Ind\left(\yv^T\Sigma\yv < t\right).
	\]
\end{proof}
However, the state of the art bound on the density can be found in {G{\"o}tze}, F. and {Naumov}, A. and {Spokoiny}, V. and {Ulyanov}, V. \cite{2017arXiv170808663G}.

\section{Conclusion}
The work allows to hope that an advancement is also possible for the density estimation. One can derive a refined version of the corollary [$\ref{D}$].

Another clear application objective is a bootstrap. It corresponds to a covariance operator empirically estimating the other one. The procedure is generally helpful for a practitioner constructing confidence sets.

Hierarchically even deeper challenge to answer is whether there exist and what are the other structurally stable probabilistic phenomena in multivariate or Hilbert spaces. Particularly, aside from Stein's lemma substituted on Stein-Chen's lemma the core of the analysis remains indicating a possible gain in the direction.  
\bibliographystyle{plain}
\bibliography{Article}
\end{document}